\newcommand{\PreserveBackslash}[1]{\let\temp=\\#1\let\\=\temp}
\newcolumntype{C}[1]{>{\PreserveBackslash\centering}p{#1}}
\newcolumntype{R}[1]{>{\PreserveBackslash\raggedleft}p{#1}}
\newcolumntype{L}[1]{>{\PreserveBackslash\raggedright}p{#1}}
\DeclareMathOperator*{\vol}{\ensuremath{Vol}}
\DeclareMathOperator*{\RQ}{\ensuremath{RQ}}
\newcommand{\id}{\mathrm{Id}}
\newcommand{\diag}{\mathrm{diag}}
\newcommand{\R}{\ensuremath{\mathbb{R}}}
\theoremstyle{plain}
\newtheorem{theorem}{Theorem}[section]
\newtheorem{lemma}[theorem]{Lemma}
\newtheorem{proposition}[theorem]{Proposition}
\newtheorem{corollary}[theorem]{Corollary}
\theoremstyle{definition}
\newtheorem{ex}[theorem]{Example}
\theoremstyle{remark}
\begin{document}
	\bibliographystyle{unsrt} 
	\title{Coloring the normalized Laplacian for oriented hypergraphs}
	\author[1,2,3]{Aida Abiad}
	\author[4]{Raffaella Mulas}
	\author[4]{Dong Zhang}
	\affil[1]{Eindhoven University of Technology, Eindhoven, The Netherlands}
	\affil[2]{Ghent University, Ghent, Belgium}
	\affil[3]{Vrije Universiteit Brussels, Brussels, Belgium}
	\affil[4]{Max Planck Institute for Mathematics in the Sciences, Leipzig, Germany}
	\date{}

	\maketitle
	
	\begin{abstract}The independence number, coloring number and related parameters are investigated in the setting of oriented hypergraphs using the spectrum of the normalized Laplace operator. For the independence number, both an inertia--like bound and a ratio--like bound are shown. A Sandwich Theorem involving the clique number, the vector chromatic number and the coloring number is proved, as well as a lower bound for the vector chromatic number in terms of the smallest and the largest eigenvalue of the normalized Laplacian. In addition, spectral partition numbers are  studied in relation to the coloring number.
		\vspace{0.2cm}
		
		\noindent {\bf Keywords:} Oriented hypergraphs, Laplace operator, Spectrum, Independence number, Coloring number
	\end{abstract}
	
	\allowdisplaybreaks[4]
	\section{Introduction}
	Oriented hypergraphs were introduced by Shi in \cite{Shi92} as a generalization of classical hypergraphs in which a plus or minus sign is assigned to each vertex--hyperedge incidence. Since their introduction, such hypergraphs have received a lot of attention. The adjacency and Kirchhoff Laplacian matrices of oriented hypergraphs were introduced by Reff and Rusnak \cite{ReffRusnak}, while the normalized Laplacian was introduced by Jost together and the second author of this paper \cite{Hypergraphs}. It is known that the spectra of these matrices encode many qualitative properties of the associated oriented hypergraph, and several problems in spectral hypergraph theory arise when trying to generalize the classical spectral results that are known for graphs. We refer the reader to \cite{CRRY2015,DR2019,GRR2019,GR2020,KR2019,Reff2014,Reff2016,ReffRusnak,RRSS2017,Rusnak2013,CLRRW2018} for a vast\,---\,but not complete\,---\,literature on the adjacency and Kirchhoff Laplacian matrices for oriented hypergraphs. We refer to \cite{AndreottiMulas,Hypergraphs,pLaplacians1,MulasZhang,Sharp,Classes,MKJ} for some literature on the hypergraph normalized Laplacian.

	The overall aim of this paper is to bring forward the study of the spectrum of the normalized Laplacian of oriented hypergraphs. In particular, this paper investigates its relation with parameters that depend on the structural properties of the hypergraphs, such as the coloring number and the independence number. While these relations have already been partly investigated in \cite{MulasZhang,pLaplacians1} for the coloring number, to the best of our knowledge they have not been yet studied for the independence number. The \emph{independence number of a hypergraph} \cite{hyp-ind10} is the maximum size of a set of vertices such that, for each pair of vertices in this set, there is no hyperedge containing both of them. The \emph{coloring number of a hypergraph} was defined by Erdős and Hajnal in 1966 \cite{Erdos} as the minimal number of colors needed for coloring the vertices so that, if two vertices are contained in a common hyperedge, they receive different colors.
	
	The following two bounds are well-known for the order of an independent set in a graph. Let $G$ be a graph with $n$ vertices and adjacency matrix eigenvalues $\theta_1\leq \ldots \leq \theta_n$. The first well-known spectral bound ({\em `inertia bound'\/}) for the independence number $\alpha$ of $G$ is due to Cvetkovi\'c \cite{C71}:
	
	\begin{equation*}
	\alpha(G)\le \min  \{\#\{i : \theta_i\le 0\},\#\{i : \theta_i\ge 0\} \}.
	\end{equation*}
	
	When $G$ is regular, another well-known bound ({\em `ratio bound'\/}) is due to Hoffman (unpublished, see for instance \cite{BH2012}):
	\begin{equation*}
	\label{bound:hoffman}
	\alpha(G) \leq n\frac{-\theta_1}{\theta_n-\theta_1}.
	\end{equation*}

	In this paper we present the first inertia--like  bound and  ratio--like bound for the independence number of an oriented hypergraph. Both upper bounds involve the eigenvalues of the normalized Laplacian and are the analogous of the aforementioned celebrated bounds for the adjacency spectrum of a graph. In addition, we study the coloring number of a hypergraph in relation to the clique number, to the vector chromatic number and to the spectral partition numbers of a hypergraph.
	
	This paper is structured as follows. In Section \ref{section:prel} we offer an overview of the basic definitions and notations that will be needed throughout this paper. In Section \ref{section:inertia} we prove a hypergraph--Laplacian version of the inertia bound. Similarly, in Section \ref{section:ratio}, we show a hypergraph--Laplacian version of the ratio bound. In Section \ref{section:vector} we investigate the coloring number and the vector chromatic number of oriented hypergraphs and finally, in Section \ref{section:spectralpart}, we show that the introduced spectral partition numbers are closely related to the coloring number.
	
	\section{Preliminary definitions and notations}\label{section:prel}
	An \emph{oriented hypergraph} \cite{Shi92} is a triple $\Gamma=(V,H,\psi_\Gamma)$ such that $V$ is a finite set of vertices, $H$ is a finite multiset of elements $h\in \mathcal{P}(V)\setminus\{\emptyset\}$ called \emph{hyperedges}, while $\psi_\Gamma:(V,H)\rightarrow \{-1,0,+1\}$ is the \emph{incidence function} and it is such that 
	\begin{equation*}
	\psi_\Gamma(i,h)\neq 0 \iff i\in h.
	\end{equation*}A vertex $i$ is an \emph{input} for a hyperedge $h$ if $\psi_\Gamma(i,h)=1$, and an \emph{output} if $\psi_\Gamma(i,h)=-1$. Two vertices $i\neq j$ are \emph{co-oriented in a hyperedge $h$} if $\psi_\Gamma(i,h)=\psi_\Gamma(j,h)\neq 0$, and they are \emph{anti-oriented in $h$} if $\psi_\Gamma(i,h)=-\psi_\Gamma(j,h)\neq 0$. Given a hyperedge $h$, we denote by $h_{\textrm{in}}$ the set of its inputs and by $h_{\textrm{out}}$ the set of its outputs. Clearly, $h=h_{\textrm{in}}\cup h_{\textrm{out}}$.
	
	The \emph{degree} of a vertex $i$, denoted $\deg(i)$, is the number of hyperedges containing $i$. The \emph{size} of a hyperedge $h$, denoted $\# h$, is the number of vertices that are contained in $h$. We say that a hypergraph is \emph{$d$--regular} if all vertices have degree $d$. 
	
	Observe that simple graphs can be seen as oriented hypergraphs such that $H$ is a set and, for each $h\in H$, there exists a unique $i\in V$ that is an input for $h$ and there exists a unique $j\in V$ that is an output for $h$. More generally, signed graphs can be seen as oriented hypergraphs such that $H$ is a set and each hyperedge has size $2$ \cite{ReffRusnak}.\newline
	
	Throughout the paper $\Gamma=(V,H,\psi_\Gamma)$ is an oriented hypergraph on $n$ vertices $\{1,\ldots,n\}$ and $m$ hyperedges $\{h_1,\ldots, h_m\}$. For simplicity, we assume that there are no vertices of degree zero.\newline
	
	Given a subset $S\subset V$, we define the \emph{sub-hypergraph} $\Gamma|_S$ as the triple $(S,H|_S,\psi_{\Gamma}|_{S})$, where
	\begin{equation*}
	H|_S:=\{h\cap S:h\in H\}.
	\end{equation*}
	
	The \emph{degree matrix} of $\Gamma$ \cite{ReffRusnak} is the $n\times n$ diagonal matrix
	\begin{equation*}
	D=D(\Gamma):=\diag(\deg(1),\ldots,\deg(n)).
	\end{equation*}
	The \emph{adjacency matrix} of $\Gamma$ \cite{ReffRusnak} is an $n\times n$ matrix $A=A(\Gamma):=(A_{ij})$, with entries $A_{ii}:=0$ for each $i\in V$ and, for $i\neq j$,
	\begin{align*}
	A_{ij}:=& \# \{\text{hyperedges in which }i \text{ and }j\text{ are anti-oriented}\}\\
	&-\# \{\text{hyperedges in which }i \text{ and }j\text{ are co-oriented}\}.
	\end{align*} 
	
	The \emph{normalized Laplacian matrix} of $\Gamma$ \cite{Hypergraphs} is the $n\times n$ matrix 
	\begin{equation*}
	L=L(\Gamma):=\id-D^{-1}A,
	\end{equation*}
	where $\id$ is the $n\times n$ identity matrix. It is known that $L$ has $n$ real, nonnegative eigenvalues, counted with multiplicity \cite{Hypergraphs}. We denote them by $\lambda_1\leq\ldots\leq \lambda_n.$ Moreover, by the Courant--Fischer--Weyl min--max principle \cite[Theorem 36]{Hypergraphs} such eigenvalues are the min--max of the \emph{Rayleigh quotient}, defined for a nonzero function $f:V\rightarrow \mathbb{R}$ by
	\begin{equation*}
	\RQ(f):=\frac{\sum_{h\in H}\left(\sum_{i\in h_{\textrm{in}}}f(i)-\sum_{j\in h_{\textrm{out}}}f(j)\right)^2}{\sum_{i\in V}\deg(i)f(i)^2}.
	\end{equation*}In particular,
	\begin{equation*}
	\lambda_1=\min_{f}\RQ(f)\quad\text{and}\quad \lambda_n=\max_{f}\RQ(f).
	\end{equation*} To prove our theorems, we will use the above characterization of $\lambda_1$ and $\lambda_n$.
	
	\section{Inertia--like bound for the independence number}\label{section:inertia}
	The \emph{independence number} $\alpha$ of a graph $G$ is the maximum size of a set of vertices that are not pairwise adjacent. The following upper bound, due to Cvetkovi\'c \cite{C71}, is a classic result in spectral graph theory. Let $G$ be a graph with adjacency spectrum $\theta_1\leq \ldots \leq \theta_n$. Then
	\begin{equation}
	\label{bound:cvetkovic}
	\alpha(G)\le \min  \{\#\{i : \theta_i\le 0\},\#\{i : \theta_i\ge 0\} \}.
	\end{equation}
	The inequality \eqref{bound:cvetkovic} is often referred to as the \emph{Cvetković bound} or \emph{inertia bound}. This bound was initially shown for the adjacency matrix, but it is easy to see that it holds for any  weighted adjacency matrix. A variant of the inertia bound has been recently used by Huang \cite{H2020} to prove a long standing conjecture in computer science.
	
	In this section we propose two generalizations of the independence number for oriented hypergraphs and we prove a generalized inertia bound in terms of the eigenvalues of the normalized Laplacian. We refer to \cite{hyp-ind1,hyp-ind2,hyp-ind3,hyp-ind4,hyp-ind5,hyp-ind6,hyp-ind7,hyp-ind8,hyp-ind9,hyp-ind10} for references on the independence number for classical hypergraphs, with no relation to the spectral properties. The list is by no mean complete, but it gives a good overview of the work done in this direction. Moreover, we refer to \cite{bdz2005,SunDas2020} for a selection of references on the relation between the independence number of a graph and the spectra of its associated operators.
	
	We say that a subset $U\subseteq V$ is \emph{independent} if $\#(U\cap h)\le 1$ for all $h\in H$.  An independent set $U$ is a \emph{maximal independent set} if, for all $i\in V\setminus U$, $U\cup\{i\}$ is not independent. Following \cite{hyp-ind10}, we define the \emph{independence number} of $\Gamma$ as 
	\begin{equation*}
	\alpha(\Gamma):=\max\{\#U:U\subseteq V \text{ independent}\}.
	\end{equation*}
	Similarly, we say that a subset $U\subseteq V$ is \emph{weakly independent} if $A_{ij}=0$ for all $i,j\in U$. We define the \emph{weak independence number} of $\Gamma$ as
	\begin{equation*}
	\alpha_w(\Gamma):=\max\{\#U:U\subseteq V \text{ weakly independent}\}.
	\end{equation*}
	Note that, while the independence number $\alpha$ of $\Gamma$ depends on $V$ and $H$ but not on the incidence function $\psi_\Gamma$, the weak independence number $\alpha_w$ of $\Gamma$ also depends on $\psi_\Gamma$.

	Next we state our first main result, an inertia--like bound for the hypergraph normalized Laplacian which upper bounds both $\alpha$ and $\alpha_w$.
	
	\begin{theorem}[Inertia--like bound for the normalized Laplacian of an oriented hypergraph]\label{thm:Inertia-Laplacian}
		Let $\Gamma$ be an oriented hypergraph with eigenvalues of the normalized Laplacian matrix $\lambda_1\leq\ldots\leq \lambda_n$. Then
		
		\begin{equation}\label{eq:inertia}
		\alpha(\Gamma) \le \alpha_w(\Gamma)\leq \min\{\#\{i:\lambda_i\leq 1\},\#\{i:\lambda_i\geq 1\}\}.
		\end{equation}
	\end{theorem}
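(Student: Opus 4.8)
First I would dispense with the easy inequality $\alpha(\Gamma)\le\alpha_w(\Gamma)$: if $U$ is independent in the sense $\#(U\cap h)\le1$ for all $h\in H$, then no hyperedge contains two vertices of $U$, so in particular no two vertices of $U$ are co-oriented or anti-oriented in any hyperedge, whence $A_{ij}=0$ for all $i,j\in U$ and $U$ is weakly independent. So it suffices to prove the second inequality for $\alpha_w$. Fix a weakly independent set $U$ with $\#U=\alpha_w(\Gamma)=:k$, so that the principal submatrix $A[U]$ of the adjacency matrix vanishes identically. The plan is to exhibit a $k$-dimensional subspace of functions $f:V\to\R$ on which the Rayleigh quotient $\RQ(f)$ is identically equal to $1$, and then invoke Courant--Fischer.

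The natural candidate subspace is $\mathcal{F}:=\{f:V\to\R : f(i)=0 \text{ for all } i\notin U\}$, which has dimension $k$. For $f\in\mathcal F$ I want to compute $\RQ(f)$. The denominator is $\sum_{i\in U}\deg(i)f(i)^2$. For the numerator, expand the square termwise: $\big(\sum_{i\in h_{\mathrm{in}}}f(i)-\sum_{j\in h_{\mathrm{out}}}f(j)\big)^2=\sum_{i,j\in h}\psi_\Gamma(i,h)\psi_\Gamma(j,h)f(i)f(j)$, and since $f$ is supported on $U$ only pairs $i,j\in h\cap U$ contribute. The diagonal terms $i=j$ give $\sum_{h}\sum_{i\in h\cap U}f(i)^2=\sum_{i\in U}\deg(i)f(i)^2$, matching the denominator. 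The off-diagonal terms, summed over all $h$, give $\sum_{i\ne j,\ i,j\in U}\big(\sum_h \psi_\Gamma(i,h)\psi_\Gamma(j,h)\big)f(i)f(j)=-\sum_{i\ne j\in U}A_{ij}f(i)f(j)$ by the definition of $A_{ij}$ (anti-oriented pairs contribute $+1$ to the count of $\psi_\Gamma(i,h)\psi_\Gamma(j,h)=-1$, co-oriented pairs $-1$ for $\psi_\Gamma(i,h)\psi_\Gamma(j,h)=+1$, so the total is $-A_{ij}$). Since $U$ is weakly independent this off-diagonal sum vanishes, hence the numerator equals the denominator and $\RQ(f)=1$ for every nonzero $f\in\mathcal F$.

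Now apply the min--max principle. Since $\RQ$ restricted to the $k$-dimensional subspace $\mathcal F$ is constantly $1$, the Courant--Fischer characterization $\lambda_j=\min_{\dim W=j}\max_{0\ne f\in W}\RQ(f)$ gives $\lambda_k\le 1$, hence $\#\{i:\lambda_i\le1\}\ge k$; and dually $\lambda_{n-k+1}=\max_{\dim W=k}\min_{0\ne f\in W}\RQ(f)\ge 1$, hence $\#\{i:\lambda_i\ge1\}\ge k$. Combining, $k=\alpha_w(\Gamma)\le\min\{\#\{i:\lambda_i\le1\},\#\{i:\lambda_i\ge1\}\}$, which is the claim.

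I do not expect a serious obstacle here; the one point requiring care is the bookkeeping in the numerator expansion — keeping track of the signs $\psi_\Gamma(i,h)\psi_\Gamma(j,h)$ and confirming they reproduce exactly $-A_{ij}$ after summing over hyperedges, and checking that the diagonal contribution is genuinely $\deg(i)f(i)^2$ (which uses that $\psi_\Gamma(i,h)^2=1$ precisely when $i\in h$). A secondary subtlety is that the min--max principle as stated in \cite{Hypergraphs} is for the Rayleigh quotient with the degree-weighted denominator, so one must make sure the ambient inner product is the degree-weighted one throughout; since $\RQ$ is scale-invariant and the subspace argument only uses the value of $\RQ$, this causes no difficulty.
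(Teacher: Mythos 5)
Your proposal is correct, and it takes a genuinely different (though closely related) route from the paper. The paper's proof is matrix-theoretic: it observes that the principal submatrix of $L$ indexed by a maximum weakly independent set $U$ is the identity of size $\alpha_w$, and then applies the Cauchy Interlacing Theorem to conclude that at most $n-\alpha_w$ eigenvalues are strictly greater than $1$ and at most $n-\alpha_w$ are strictly less than $1$. You instead argue variationally: you check by direct expansion that the Rayleigh quotient is identically $1$ on the $\alpha_w$-dimensional space of functions supported on $U$ (your sign bookkeeping $\sum_h\psi_\Gamma(i,h)\psi_\Gamma(j,h)=-A_{ij}$ and $\psi_\Gamma(i,h)^2=1$ for $i\in h$ is exactly right), and then apply Courant--Fischer twice. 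These are two faces of the same coin, since interlacing is itself proved by restricting the Rayleigh quotient to coordinate subspaces, so your argument in effect re-derives precisely the instance of interlacing that is needed. What your version buys is self-containedness and a clean bypass of a small point the paper leaves implicit: $L=\id-D^{-1}A$ is not symmetric, so Hermitian interlacing should strictly be applied to the conjugate $D^{1/2}LD^{-1/2}$ (equivalently, one works with the degree-weighted inner product), whereas the min--max characterization of the Rayleigh quotient cited from the reference applies directly. What the paper's version buys is brevity, since the interlacing theorem does all the eigenvalue counting in one stroke. Your treatment of the first inequality $\alpha\le\alpha_w$ coincides with the paper's.
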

	\begin{proof}The first inequality in \eqref{eq:inertia} is clear since, if $U\subseteq V$ is independent, then $A_{ij}=0$ for all $i,j\in U$, therefore $U$ is also weakly independent.
		
		In order to prove the second inequality, let $U\subseteq V$ be a weakly independent set such that $\# U=\alpha_w$. Then, the matrix obtained from $L$ by deleting the rows and columns corresponding to the vertices in $V\setminus U$ is the identity matrix of size $n-\alpha_w$ and it has eigenvalue $1$ with multiplicity $n-\alpha_w$. By  Cauchy Interlacing Theorem (see for instance [\cite{matrixanalysis}, Theorem 4.3.17]), this implies that 
		\begin{equation*}
		\#\{i:\lambda_i> 1\}\le n-\alpha_w\quad \text{and}\quad \#\{i:\lambda_i< 1\}\le n-\alpha_w.
		\end{equation*}Therefore,
		\begin{equation*}
		\alpha_w\leq \min\{n-\#\{i:\lambda_i> 1\},n-\#\{i:\lambda_i< 1\}\}=\min\{\#\{i:\lambda_i\leq 1\},\#\{i:\lambda_i\geq 1\}\}.
		\end{equation*}
	\end{proof}
	The next example illustrates the sharpness of the upper bound in Theorem \ref{thm:Inertia-Laplacian}.
	
	\begin{ex}\label{example:independent}
		If $\Gamma=(V,H,\psi_\Gamma)$ is such that $V=\{1,\ldots,n\}$ and $H=\{\{1\},\ldots,\{n\}\}$ (Figure \ref{fig:loops}), then clearly $\lambda_1=\ldots=\lambda_n=1$ and $\alpha=\alpha_w=n$, independently of $\psi_\Gamma$. Thus, the bound \eqref{eq:inertia} is sharp.
		\begin{figure}[h]
			\centering
			\includegraphics[width=10cm]{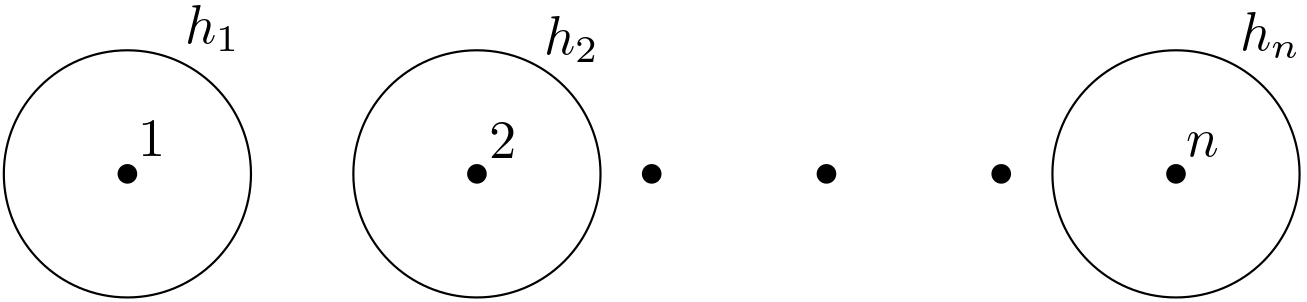}
			\caption{Hypergraph which holds the bound \eqref{eq:inertia} with equality.}
			\label{fig:loops}
		\end{figure}
	\end{ex}

	\section{Ratio--like bound for the independence number}\label{section:ratio}
	In this section we prove a ratio--like bound for the independence number of an oriented hypergraph. For a regular graph $G$ with degree $k$ and adjacency spectrum $\theta_1\leq \ldots \leq \theta_n=k$, Hoffman \cite[p.39]{BH2012} proved the following well-known bound (\emph{Hoffman's ratio bound}): 
	\begin{equation}\label{ratiobound}
	\alpha (G)\leq n\frac{-\theta_1}{\theta_n-\theta_1}
	\end{equation}
	and if an independent set $U$ meets \eqref{ratiobound} then every vertex not in $U$ is adjacent to precisely $-\theta_1$ vertices of $U$.
	
	Our next theorem shows a ratio--like bound for the general case of regular oriented hypergraphs, and for the eigenvalues of $L$ instead of the eigenvalues of $A$.
	\begin{theorem}[Ratio--like bound for the normalized Laplacian of an oriented hypergraph]\label{thm:ratio} Let $\Gamma$ be a $d$--regular oriented hypergraph such that $\#h_{in}=\#h_{out}$ for all $h\in H$. Let $\lambda_1\leq\ldots\leq \lambda_n$ be the eigenvalues of the normalized Laplacian matrix of $\Gamma$. Then,
		\begin{equation}\label{eq:ratio}
		\alpha(\Gamma)\le n\Bigl(1-\frac{1}{\lambda_n}\Bigr).
		\end{equation}
		If equality holds, then $\alpha(\Gamma) \le n/2$ and in particular, if $\alpha(\Gamma) =n/2$, it implies that $\Gamma$ is a bipartite graph with $\alpha$ vertices on each side of the bipartition. Moreover, if \eqref{eq:ratio} holds with equality and $\Gamma$ is a graph, then $\Gamma|_{V\setminus U}$ is a $d(n-2\alpha)/(n-\alpha)$--regular graph.
	\end{theorem}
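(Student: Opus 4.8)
The plan is to run Hoffman's classical argument on the quadratic form $\RQ$ instead of the adjacency form, using the hypothesis $\#h_{\mathrm{in}}=\#h_{\mathrm{out}}$ as the hypergraph substitute for the fact that a $d$--regular graph has largest adjacency eigenvalue $d$. Write $\langle f,g\rangle_D:=\sum_{i\in V}\deg(i)f(i)g(i)$. Expanding the square in the numerator of $\RQ$ and using that $\sum_{h\ni i,j}\psi_\Gamma(i,h)\psi_\Gamma(j,h)=-A_{ij}$ for $i\neq j$, one checks that the numerator equals $\langle f,f\rangle_D-f^{\top}Af$, so $\RQ(f)=\langle Lf,f\rangle_D/\langle f,f\rangle_D$; in particular $L$ is self--adjoint and positive semidefinite for $\langle\cdot,\cdot\rangle_D$, and admits an orthonormal eigenbasis $\phi_1,\dots,\phi_n$ with $L\phi_i=\lambda_i\phi_i$. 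Since $\#h_{\mathrm{in}}=\#h_{\mathrm{out}}$ for every $h$, the constant function $\mathbf 1:=\mathbf 1_V$ satisfies $\RQ(\mathbf 1)=0$, hence $\lambda_1=0$ and, being a minimizer of $\RQ$, $\mathbf 1$ is an eigenfunction: $L\mathbf 1=0$; normalize $\phi_1=\mathbf 1/\sqrt{dn}$.

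Next, let $U$ be a maximum independent set, $\#U=\alpha$, and set $x:=\mathbf 1_U$. Independence forces $A_{ij}=0$ for all $i,j\in U$, so $x^{\top}Ax=0$ and hence $\langle Lx,x\rangle_D=\langle x,x\rangle_D=d\alpha$. Writing $x=\sum_ic_i\phi_i$, one gets $c_1=\langle x,\phi_1\rangle_D=d\alpha/\sqrt{dn}$, so $c_1^2=d\alpha^2/n$, while $\sum_ic_i^2=\langle x,x\rangle_D=d\alpha$. Using $\lambda_1=0$,
\[
d\alpha=\langle Lx,x\rangle_D=\sum_{i\ge 2}\lambda_ic_i^2\le\lambda_n\sum_{i\ge 2}c_i^2=\lambda_n\Bigl(d\alpha-\tfrac{d\alpha^2}{n}\Bigr).
\]
Dividing by $d\alpha>0$ gives $1\le\lambda_n(1-\alpha/n)$, and since $\lambda_n>0$ (indeed $\mathrm{tr}\,L=n$) this rearranges to $\alpha\le n(1-1/\lambda_n)$, which is \eqref{eq:ratio}.

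For the equality case, note that equality in \eqref{eq:ratio} is equivalent to equality in the displayed chain, i.e.\ to $\sum_{i\ge 2}(\lambda_n-\lambda_i)c_i^2=0$; since each summand is nonnegative, $c_i=0$ whenever $i\ge 2$ and $\lambda_i<\lambda_n$, so $y:=x-c_1\phi_1=\mathbf 1_U-\tfrac{\alpha}{n}\mathbf 1$ lies in the $\lambda_n$--eigenspace, whence $Ly=\lambda_ny$. Combining this with $L\mathbf 1=0$ and $L=\id-\tfrac1dA$ gives $Ax=d(1-\lambda_n)x+\tfrac{d\lambda_n\alpha}{n}\mathbf 1$, and equality also forces $\lambda_n=n/(n-\alpha)$. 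Evaluating the last identity at any $v\notin U$ (where $x_v=0$) shows $\sum_{w\in U}A_{vw}=\tfrac{d\lambda_n\alpha}{n}=\tfrac{d\alpha}{n-\alpha}=:\rho$ for every $v\notin U$. Now $\sum_{w\in U}A_{vw}=-\sum_{h\ni v}\psi_\Gamma(v,h)\sum_{w\in U\cap h}\psi_\Gamma(w,h)$, and since $\#(U\cap h)\le 1$ this is a sum of at most $\deg(v)=d$ terms, each equal to $\pm1$; hence $\rho\le d$, i.e.\ $\tfrac{d\alpha}{n-\alpha}\le d$, so $\alpha\le n/2$. I expect this to be the heart of the proof: the point is that equality rigidly places $\mathbf 1_U-\tfrac{\alpha}{n}\mathbf 1$ in the top eigenspace, turning an eigenvalue inequality into an exact formula for $Ax$, and then the resulting constant ``signed degree into $U$'' is bounded by $d$ merely because it is a $\pm1$--sum indexed by hyperedges through one vertex.

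Finally, if moreover $\alpha=n/2$ then $\rho=d$, so for each $v\notin U$ the $\pm1$--sum above must have all $d$ possible terms present and all equal to $+1$: every hyperedge through $v$ meets $U$, and in each one $v$ is anti--oriented with the unique vertex of $U$ it contains. Since $\#h=2\,\#h_{\mathrm{in}}\ge 2$ and $\#(h\cap U)\le 1$, every hyperedge $h$ contains some $v\notin U$, hence meets $U$, say $h\cap U=\{w\}$; every other vertex of $h$ is then anti--oriented with $w$, so they are pairwise co--oriented, forcing $\{\#h_{\mathrm{in}},\#h_{\mathrm{out}}\}=\{1,\#h-1\}$ and therefore $\#h=2$. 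Thus $\Gamma$ is a graph all of whose edges join $U$ and $V\setminus U$, i.e.\ it is bipartite with parts of size $n/2=\alpha$. If instead one only assumes equality and that $\Gamma$ is a graph, then for $v\notin U$ the entries $A_{vw}$ with $w\in U$ are $0/1$ and $\rho$ counts the neighbours of $v$ in $U$, so $v$ has $d-\rho=d-\tfrac{d\alpha}{n-\alpha}=\tfrac{d(n-2\alpha)}{n-\alpha}$ neighbours in $V\setminus U$, a number independent of $v$; hence $\Gamma|_{V\setminus U}$ is $\tfrac{d(n-2\alpha)}{n-\alpha}$--regular.
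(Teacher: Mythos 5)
Your proof is correct and is essentially the paper's argument: the same ratio-bound strategy via the variational characterization of $\lambda_n$, the same rigidity step showing that (a rescaling of) $\mathbf{1}_U-\frac{\alpha}{n}\mathbf{1}$ is a top eigenfunction when equality holds, the same resulting identity $\sum_{w\in U}A_{vw}=d\alpha/(n-\alpha)$ for $v\notin U$, and the same combinatorial endgame for $\alpha\le n/2$, the bipartite case and the regularity of $\Gamma|_{V\setminus U}$. The only (cosmetic) difference is how you obtain $\lambda_n\ge n/(n-\alpha)$: you use the degree-weighted orthonormal eigenbasis together with $L\mathbf{1}=0$ (the classical Hoffman projection), whereas the paper plugs the two-valued test function $f=t\mathbf{1}_U+\mathbf{1}_{V\setminus U}$ into the Rayleigh quotient and optimizes over $t$.
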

	\begin{proof}
		Let $U$ be a maximal independent set, so that $\# U=\alpha$, and let $f:V\to \R$ be such that
		\begin{equation*}
		f(i):=\begin{cases}
		t&\text{ if } i\in U,\\
		1&\text{ if } i\in V\setminus U.
		\end{cases}
		\end{equation*}
		Let $\vol(U):=\sum_{i\in U}\deg(i)$. Then
		\begin{equation*}
		\RQ(f)=\frac{\sum_{h\in H,h\cap U\ne\emptyset}(t-1)^2}{t^2\sum_{i\in U}\deg(i)+\sum_{i\in V\setminus U}\deg(i)}=\frac{\vol(U)(t-1)^2}{
			\vol(U)t^2+\vol(V)-\vol(U)}=\frac{\alpha(t-1)^2}{
			\alpha t^2+n-\alpha}=:\phi(t),
		\end{equation*}
		which attains its maximum at $t=1-\frac{n}{\alpha}$. Consequently, the largest eigenvalue is such that
		\begin{equation*}
		\lambda_n\ge \phi\Bigl(1-\frac{n}{\alpha}\Bigr)= \frac{n}{n-\alpha}.
		\end{equation*}Hence,
		\begin{equation*}
		\alpha\le n\Bigl(1-\frac{1}{\lambda_n}\Bigr).
		\end{equation*}This proves \eqref{eq:ratio}. 
		
		If equality holds, then 
		\begin{equation*}
		f(i):=\begin{cases}
		1-\frac{n}{\alpha}&\text{ if } i\in U,\\
		1&\text{ if } i\in V\setminus U.
		\end{cases}
		\end{equation*}is an eigenfunction with eigenvalue $\lambda_n=\frac{n}{n-\alpha}$. Since $\Gamma$ is $d$--regular, this implies that $f$ is an eigenfunction for $A$ with eigenvalue $d(1-n/(n-\alpha))=d\alpha/(\alpha-n)$. Therefore, for all $i\in V\setminus U$,
		\begin{equation*}
		\sum_{j\in U}A_{ij}\Bigl(1-\frac{n}{\alpha}\Bigr)+ \sum_{j'\in V\setminus U}A_{ij'}=\frac{d\alpha}{\alpha-n}.
		\end{equation*}Also, since $\Gamma$ is $d$--regular and $\#h_{in}=\#h_{out}$ for all $h$, we have that, for each $i\in V\setminus U$, $\sum_{j\in V}A_{ij}=d$. Hence,
		\begin{equation*}
		\sum_{j'\in V\setminus U}A_{ij'}=d-\sum_{j\in U}A_{ij}
		\end{equation*}and therefore
		\begin{equation*}
		-\frac{n}{\alpha}\Biggl(\sum_{j\in U}A_{ij}\Biggr)=\frac{d\alpha}{\alpha-n}-d=\frac{nd}{\alpha-n}.\end{equation*}Now, since $U$ is an independent set,
		\begin{equation*}
		\sum_{j\in U}A_{ij}=\#\{h\ni i:h\cap U\text{ and }i\text{ anti-oriented in }h\}-\#\{h\ni i:h\cap U\text{ and }i\text{ co-oriented in }h\}.
		\end{equation*}Therefore
		\begin{align*}
		d&\geq \#\{h\ni i:h\cap U\text{ and }i\text{ anti-oriented in }h\}+\#\{h\ni i:h\cap U\text{ and }i\text{ co-oriented in }h\}\\
		&\geq 
		\#\{h\ni i:h\cap U\text{ and }i\text{ anti-oriented in }h\}-\#\{h\ni i:h\cap U\text{ and }i\text{ co-oriented in }h\}\\
		& = \frac{\alpha d}{n-\alpha},
		\end{align*}which implies $\alpha\leq n/2$. In particular, if $\alpha= n/2$, then $i$ and $h\cap U$ are anti-oriented, for all $h\in H$ and for all $i\in V\setminus U$. Clearly, this implies that $\#h_{in}=\#h_{out}=1$ for all $h$, hence $\Gamma$ is a graph. Moreover, by construction $\Gamma$ must be a bipartite graph with $\alpha$ vertices on each side of the bipartition.
		
		Finally, if we assume that \eqref{eq:ratio} is attained with equality and $\Gamma$ is a graph, by the above calculations we must have that, for all $i\in V\setminus U$, it holds that
		\begin{equation*}
		\#\{h\ni i: h\cap U\neq \emptyset\}=\alpha  d/(n-\alpha)\quad\text{and}\quad \#\{h\ni i: h\cap U= \emptyset\}=d-\alpha  d/(n-\alpha)=\frac{d(n-2\alpha)}{n-\alpha}.
		\end{equation*}Hence, the subgraph $\Gamma|_{V\setminus U}$ is a $d(n-2\alpha)/(n-\alpha)$--regular graph.
	\end{proof}
	
	The following examples illustrate that the bound \eqref{eq:ratio} from Theorem \ref{thm:ratio} is best possible.
	
	\begin{ex}
		Let $\Gamma=(V,H,\psi_\Gamma)$ be such that:
		\begin{itemize}
			\item $V=\{1,2,3,4\}$;
			\item $H=\{h_1,h_2,h_3\}$;
			\item $h_1$ has $1$ and $2$ as inputs, $3$ and $4$ as outputs;
			\item $h_2$ has $1$ and $3$ as inputs, $2$ and $4$ as outputs;
			\item $h_3$ has $1$ and $4$ as inputs, $2$ and $3$ as outputs.
		\end{itemize}Then, $\Gamma$ is  $3$--regular and it satisfies the condition of Theorem \ref{thm:ratio}. Also, $\alpha(\Gamma)=1$ and $\lambda_4=4/3$, implying that
		\begin{equation*}
		\alpha(\Gamma)=1=n\left(1-\frac{1}{\lambda_n}\right).
		\end{equation*}Therefore, this $\Gamma$ provides an example of when the bound \eqref{eq:ratio} is sharp.
	\end{ex}
	\begin{ex}\label{ex:4}
		Let $\Gamma=(V,H,\psi_\Gamma)$ be such that (Figure \ref{fig:4}):
		\begin{itemize}
			\item $V=\{1,2,3,4\}$;
			\item $H=\{h_1,h_2,h_3\}$;
			\item $h_1$ has $1$ as input and $2$ as output;
			\item $h_2$ has $3$ as input and $4$ as output;
			\item $h_3$ has $1$ and $2$ as inputs, $3$ and $4$ as outputs.
		\end{itemize}Then, $\Gamma$ is $2$--regular and it satisfies the condition of Theorem \ref{thm:ratio}. Also, $\alpha(\Gamma)=1$ and $\lambda_n=2$, therefore,
		\begin{equation*}
		\alpha (\Gamma)=1<n\left(1-\frac{1}{\lambda_n}\right)=2.
		\end{equation*}
		This example shows that the bound \eqref{eq:ratio} cannot be improved by using equality.
		\begin{figure}[h]
			\centering
			\includegraphics[width=9cm]{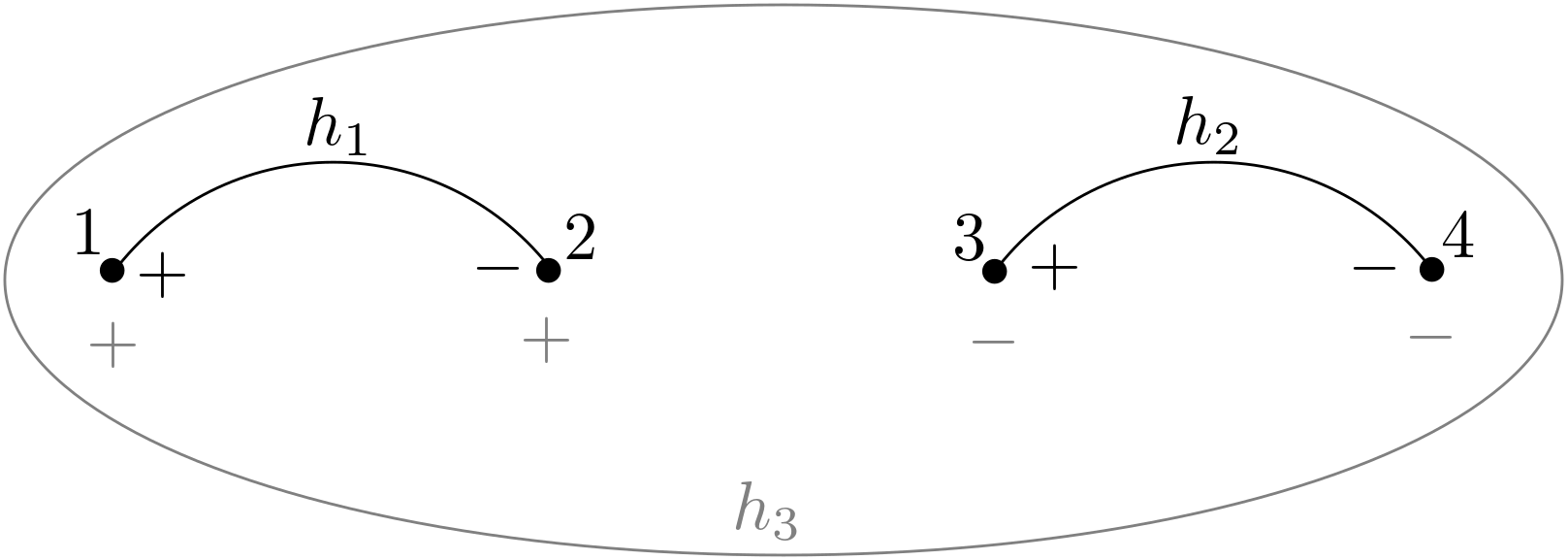}
			\caption{The oriented hypergraph from Example \ref{ex:4}.}
			\label{fig:4}
		\end{figure}
	\end{ex}

	\section{Coloring number and Sandwich Theorem}\label{section:vector}

	In this section we introduce the concepts of coloring number, clique number and vector chromatic number of an oriented hypergraph with the aim to prove a Sandwich Theorem type of result involving these parameters. We refer the reader to \cite{GR2001} for the basic background on these parameters for the graph case, and to \cite{Knuth} for the Sandwich Theorem for the graph case.
	
	A \emph{proper $k$--coloring of the vertices} of an oriented hypergraph $\Gamma$ is a function $f:V\to \{1,\ldots,k\}$ such that, for each hyperedge $h$, if $i\neq j$ are both in $h$, then $f(i)\ne f(j)$. The \emph{coloring number (or chromatic number)} $\chi=\chi(\Gamma)$ is the minimal $k$ such that there exists a proper $k$--coloring of the vertices of $\Gamma$.
	
	Similarly, we define the \emph{clique number} $\omega=\omega(\Gamma)$ as the size of the largest $U\subset V$ such that for any $i\ne j\in U$ it holds that $\{i,j\}\subset h$ for some $h\in H$.  Note that if $\Gamma$ is a graph, then the clique number is just the size of the largest clique of $\Gamma$.
	
	We adapt the concept of vector chromatic number of a graph  \cite{KMS98} to the setting of oriented hypergraphs as follows. We define the \emph{vector chromatic number} of an oriented hypergraph $\Gamma$  $\chi_{\textrm{v}}=\chi_{\textrm{v}}(\Gamma)$ as the minimal $k$ for which there exists an assignment of unit vectors $\mathbf{u_i}\in \mathbb{R}^n$ to each vertex $i\in V$, such that
	\begin{equation}\label{eq:vector}
	\langle \mathbf{u_i},\mathbf{u_j}\rangle= -\frac{1}{k-1}
	\end{equation}whenever $i\ne j\in h$ for some $h\in H$.
	
	Now we are ready to prove a \emph{Sandwich Theorem} for the clique number, vector chromatic number and coloring number of an oriented hypergraph.
	
	\begin{theorem}\label{thm:sandwich}[Sandwich Theorem]
		For any oriented hypergraph $\Gamma$, $$\omega (\Gamma) \leq \chi_{\mathrm{v}}(\Gamma)\leq \chi (\Gamma).$$
	\end{theorem}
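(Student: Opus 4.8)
The plan is to prove the two inequalities separately, mimicking the classical Sandwich Theorem but adapting each step to the hypergraph incidence structure.

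\emph{The inequality $\chi_{\mathrm v}(\Gamma)\le\chi(\Gamma)$.} First I would take a proper $\chi$-coloring $f:V\to\{1,\dots,\chi\}$ of $\Gamma$. The standard trick is to realize the $\chi$ colors as the vertices of a regular simplex in $\mathbb R^{\chi-1}\subseteq\mathbb R^n$: choose unit vectors $\mathbf{v_1},\dots,\mathbf{v_\chi}$ with $\langle\mathbf{v_a},\mathbf{v_b}\rangle=-\tfrac{1}{\chi-1}$ for $a\ne b$ (these exist, e.g. as the $\chi$ vertices of a centered regular simplex). Then set $\mathbf{u_i}:=\mathbf{v}_{f(i)}$. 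If $i\ne j$ lie in a common hyperedge $h$, then $f(i)\ne f(j)$ by properness, so $\langle\mathbf{u_i},\mathbf{u_j}\rangle=-\tfrac{1}{\chi-1}$, exactly the condition \eqref{eq:vector} with $k=\chi$. Hence $\chi_{\mathrm v}(\Gamma)\le\chi(\Gamma)$. This step is routine; the only thing to check is that the value $-\tfrac1{k-1}$ is monotone in $k$ in the right direction so that "the minimal such $k$" is well-posed, but the construction above directly exhibits a feasible assignment at $k=\chi$, which suffices.

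\emph{The inequality $\omega(\Gamma)\le\chi_{\mathrm v}(\Gamma)$.} Let $k=\chi_{\mathrm v}(\Gamma)$ and fix unit vectors $\{\mathbf{u_i}\}$ satisfying \eqref{eq:vector}. Let $U\subseteq V$ be a clique of size $\omega$, so every pair $i\ne j$ in $U$ lies in a common hyperedge and therefore $\langle\mathbf{u_i},\mathbf{u_j}\rangle=-\tfrac1{k-1}$. Consider the vector $\mathbf{w}:=\sum_{i\in U}\mathbf{u_i}$. Then
\[
0\le\|\mathbf{w}\|^2=\sum_{i\in U}\|\mathbf{u_i}\|^2+\sum_{i\ne j\in U}\langle\mathbf{u_i},\mathbf{u_j}\rangle=\omega+\omega(\omega-1)\Bigl(-\frac{1}{k-1}\Bigr)=\omega\Bigl(1-\frac{\omega-1}{k-1}\Bigr).
\]
Since $\omega>0$, this forces $1-\tfrac{\omega-1}{k-1}\ge0$, i.e. $\omega-1\le k-1$, i.e. $\omega\le k=\chi_{\mathrm v}(\Gamma)$.

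\emph{Main obstacle.} There is essentially no deep obstacle here: both halves are the classical graph arguments, and the hypergraph structure enters only through the phrase "$i\ne j$ lie in a common hyperedge," which plays exactly the role that adjacency plays for graphs. The one point that deserves a sentence of care is that the definition of $\chi_{\mathrm v}$ is unaffected by the \emph{orientation} $\psi_\Gamma$ (the condition \eqref{eq:vector} refers only to co-membership in a hyperedge, not to orientation), and likewise $\omega$ and $\chi$ are orientation-independent; so the theorem is really a statement about the underlying (multi-)hypergraph, and no signs need to be tracked. I would also remark that, unlike in the graph case, $\chi(\Gamma)$ can be strictly smaller than the clique cover bound one might naively expect, so the sandwich can be loose; but that is outside what the theorem claims.
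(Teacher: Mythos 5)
Your proposal is correct and follows essentially the same route as the paper: the second inequality is proved by realizing the colors as vertices of a regular simplex with pairwise inner product $-\tfrac{1}{\chi-1}$ and assigning vectors by color class, and the first by computing $\bigl\|\sum_{i\in U}\mathbf{u_i}\bigr\|^2$ over a maximum clique $U$ (the paper phrases this as a contradiction from $\chi_{\mathrm v}\le\omega-1$, you state it directly, which is only a cosmetic difference).
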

	\begin{proof}In order to prove the inequality on the left hand side, assume by contradiction that $\chi_{\textrm{v}}\le  \omega-1$ and let $U\subset V$ be a clique of size $\omega$. By definition of vector chromatic number, for any pair of vertices $i\ne j$ in $U$, $\langle \mathbf{u_i},\mathbf{u_j}\rangle=-1/(\chi_{\textrm{v}}-1)$. Therefore, from the assumption that $\chi_{\textrm{v}}\le  \omega-1$, it follows that
		$$\|\sum_{i\in U}\mathbf{u_i}\|^2=\sum_{i\in U} \|\mathbf{u_i}\|^2+\sum_{i\ne j\text{ in }U}\langle \mathbf{u_i},\mathbf{u_j}\rangle=\omega+2{\omega\choose 2}\Biggl(-\frac{1}{\chi_{\textrm{v}}-1}\Biggr)\le-\frac{-\omega}{\omega-2}<0,$$
		which is a contradiction.
		
		In order to prove the inequality on the right hand side observe that, since $\chi\le n$, by Lemma 4.1 in \cite{KMS98} there exist $\chi$ unit vectors $\mathbf{\hat{u}_1},\ldots,\mathbf{\hat{u}_\chi}\in\R^n$ satisfying 
		\begin{equation*}
		\langle \mathbf{\hat{u}_i},\mathbf{\hat{u}_j}\rangle= -\frac{1}{\chi-1} \quad \text{whenever } 1\leq i\neq j\leq \chi.
		\end{equation*}Now, for each coloring class $C_l$ of $\Gamma$, let $\mathbf{u_i}:=\mathbf{\hat{u}_l}$ if $i\in C_l$. Then, $\mathbf{u_1},\ldots,\mathbf{u_n}$ are unit vectors that satisfy \eqref{eq:vector}. Therefore $\chi_{\textrm{v}}\leq \chi$.
	\end{proof}

	Our next main result (Theorem \ref{thm:vector-chromatic}) presents a lower bound for the vector chromatic number of an oriegnted hypergraph in terms of the smallest and the largest eigenvalue of the normalized Laplacian matrix. First we prove a preliminary result that we will need.
	\begin{lemma}\label{lemma:Chung}
		If $\mathbf{v_1},\ldots,\mathbf{v_n}\in\mathbb{R}^n$, then
		$$\lambda_1\leq \frac{\sum_{i,j}\sqrt{\frac{\deg(i)}{\deg(j)}}L_{ij}\langle \mathbf{v_i},\mathbf{v_j}\rangle}{\sum_i \|\mathbf{v_i}\|^2} \leq \lambda_n.$$
	\end{lemma}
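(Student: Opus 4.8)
The plan is to reduce the vector-valued Rayleigh quotient to a sum of ordinary scalar Rayleigh quotients for the \emph{symmetrized} normalized Laplacian. First I would record the elementary identity
\begin{equation*}
\sqrt{\tfrac{\deg(i)}{\deg(j)}}\,L_{ij}=\delta_{ij}-\frac{A_{ij}}{\sqrt{\deg(i)\deg(j)}}=:\mathcal{L}_{ij},
\end{equation*}
which follows directly from $L=\id-D^{-1}A$ (the diagonal terms give $1$ since $A_{ii}=0$, the off-diagonal terms give $-A_{ij}/\sqrt{\deg(i)\deg(j)}$). Thus $\mathcal{L}=\id-D^{-1/2}AD^{-1/2}=D^{1/2}LD^{-1/2}$ is a \emph{symmetric} matrix similar to $L$, so its eigenvalues are exactly $\lambda_1\le\cdots\le\lambda_n$. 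Equivalently, substituting $w=D^{1/2}f$ in the Rayleigh quotient $\RQ$ from Section~\ref{section:prel} shows $\RQ(f)=\frac{w^{\top}\mathcal{L}w}{w^{\top}w}$, so by the stated min--max characterization of $\lambda_1,\lambda_n$ (equivalently, by Courant--Fischer for the symmetric matrix $\mathcal{L}$) one has the scalar bound $\lambda_1\|w\|^2\le w^{\top}\mathcal{L}w\le\lambda_n\|w\|^2$ for every $w\in\R^n$.

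Next I would expand the inner products coordinate by coordinate. Writing $\mathbf{v_i}=(v_i^{(1)},\dots,v_i^{(n)})$ and setting $w^{(k)}:=(v_1^{(k)},\dots,v_n^{(k)})^{\top}\in\R^n$ for $k=1,\dots,n$, we obtain
\begin{equation*}
\sum_{i,j}\sqrt{\tfrac{\deg(i)}{\deg(j)}}\,L_{ij}\langle\mathbf{v_i},\mathbf{v_j}\rangle=\sum_{k=1}^{n}\sum_{i,j}\mathcal{L}_{ij}v_i^{(k)}v_j^{(k)}=\sum_{k=1}^{n}(w^{(k)})^{\top}\mathcal{L}\,w^{(k)},
\end{equation*}
and likewise $\sum_i\|\mathbf{v_i}\|^2=\sum_{k=1}^{n}\|w^{(k)}\|^2$.

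Applying the scalar bound to each $w^{(k)}$ and summing over $k$ gives
\begin{equation*}
\lambda_1\sum_{k=1}^{n}\|w^{(k)}\|^2\le\sum_{k=1}^{n}(w^{(k)})^{\top}\mathcal{L}\,w^{(k)}\le\lambda_n\sum_{k=1}^{n}\|w^{(k)}\|^2,
\end{equation*}
and dividing through by $\sum_i\|\mathbf{v_i}\|^2>0$ (we may assume the $\mathbf{v_i}$ are not all zero, else there is nothing to prove) yields the claimed inequalities. I do not expect a genuine obstacle here; the only point requiring a little care is the passage from the non-symmetric matrix $L$ to the symmetric matrix $\mathcal{L}$ with the same spectrum, which is exactly what makes the coordinatewise application of the ordinary Rayleigh-quotient inequality legitimate.
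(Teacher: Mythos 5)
Your proof is correct and follows essentially the same route as the paper: pass to the symmetric Chung Laplacian $\mathcal{L}=D^{1/2}LD^{-1/2}$, which has the same spectrum as $L$, and then bound the resulting Rayleigh-type quotient. The only difference is that you spell out, via the coordinatewise decomposition into the vectors $w^{(k)}$, the step the paper compresses into ``as a consequence of the Min--Max Principle,'' which is a welcome clarification rather than a deviation.
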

	
	\begin{proof}Note that the \emph{Chung Laplacian} $\mathcal{L}:=D^{\frac12}LD^{-\frac12}$ is a symmetric matrix that has the same spectrum as $L$. Since $\mathcal{L}$ is symmetric, as a consequence of the Min--Max Principle we have that
		\begin{equation*}
		\lambda_1\le \frac{\sum_{i,j}\mathcal{L}_{ij}\langle \mathbf{v_i},\mathbf{v_j}\rangle }{\sum_i \|\mathbf{v_i}\|^2}\le \lambda_n
		\end{equation*}for each $\mathbf{v_1},\ldots,\mathbf{v_n}\in\mathbb{R}^n$. Hence,
		\begin{equation*}
		\lambda_1\le \frac{\sum_{i,j}\sqrt{\frac{\deg(i)}{\deg(j)}}L_{ij}\langle \mathbf{v_i},\mathbf{v_j}\rangle }{\sum_i \|\mathbf{v_i}\|^2}\le \lambda_n.
		\end{equation*}
	\end{proof}
	
	Now we are ready to prove the lower bound for the vector chromatic number.
	\begin{theorem}
		\label{thm:vector-chromatic}Let $\Gamma$ be an oriented hypergraph with eigenvalues of the normalized Laplacian matrix $\lambda_1\leq\ldots\leq \lambda_n$. Then,
		\begin{equation*} 
		\chi_{\mathrm{v}}(\Gamma)\ge\frac{\lambda_n-\lambda_1}{\min\{\lambda_n-1,1-\lambda_1\}}.
		\end{equation*}
	\end{theorem}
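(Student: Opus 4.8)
The plan is to apply Lemma~\ref{lemma:Chung} to test vectors obtained by rescaling an optimal vector colouring coordinatewise by an eigenfunction of the normalized Laplacian. First I would set aside the degenerate case $L=\id$ (equivalently $A=0$), in which $\lambda_1=\dots=\lambda_n=1$ and the right-hand side is undefined; so assume $L\neq\id$. Since $\sum_i\lambda_i=\operatorname{tr}(L)=\sum_i L_{ii}=n$, the eigenvalues average $1$ without being all equal, hence $\lambda_1<1<\lambda_n$ and $\min\{\lambda_n-1,\,1-\lambda_1\}>0$; moreover $A\neq0$ forces a hyperedge of size at least two, so $\chi_{\mathrm{v}}(\Gamma)=:k\ge2$ and $k-1>0$. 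I would then fix unit vectors $\mathbf{u_1},\dots,\mathbf{u_n}\in\mathbb{R}^n$ realizing $\chi_{\mathrm{v}}$, so that $\langle\mathbf{u_i},\mathbf{u_j}\rangle=-\tfrac1{k-1}$ whenever $i\neq j$ lie in a common hyperedge, and work with the symmetric Chung Laplacian $\mathcal L:=D^{1/2}LD^{-1/2}$ from the proof of Lemma~\ref{lemma:Chung}, whose spectrum is again $\lambda_1\le\dots\le\lambda_n$.

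The key observation is that $\mathcal L_{ii}=L_{ii}=1$ while, for $i\neq j$, $\mathcal L_{ij}=-A_{ij}/\sqrt{\deg(i)\deg(j)}$ vanishes unless $i$ and $j$ lie in a common hyperedge (otherwise $A_{ij}=0$); that is, $\mathcal L$ is supported exactly on the diagonal together with the pairs on which the definition of $\chi_{\mathrm{v}}$ controls $\langle\mathbf{u_i},\mathbf{u_j}\rangle$. Next I would pick a unit eigenfunction $c=(c_1,\dots,c_n)$ of $\mathcal L$ with $\mathcal L c=\mu c$, $\mu\in\{\lambda_1,\dots,\lambda_n\}$, and set $\mathbf{v_i}:=c_i\mathbf{u_i}\in\mathbb{R}^n$. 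Then $\sum_i\|\mathbf{v_i}\|^2=\|c\|^2=1$, and substituting $\langle\mathbf{u_i},\mathbf{u_j}\rangle=-\tfrac1{k-1}$ into every off-diagonal term (legitimate since the term vanishes wherever $\mathcal L_{ij}=0$) collapses the numerator of Lemma~\ref{lemma:Chung} to $\tfrac{k}{k-1}\|c\|^2-\tfrac1{k-1}c^{\top}\mathcal L c=\tfrac{k-\mu}{k-1}$. Lemma~\ref{lemma:Chung} then gives
\[
\lambda_1\ \le\ \frac{k-\mu}{k-1}\ \le\ \lambda_n .
\]

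To finish, I would specialize $\mu$ at the two ends of the spectrum. Choosing $c$ an eigenfunction for $\mu=\lambda_n$, the left inequality reads $\lambda_1(k-1)\le k-\lambda_n$, i.e.\ $\lambda_n-\lambda_1\le k(1-\lambda_1)$, hence $k\ge(\lambda_n-\lambda_1)/(1-\lambda_1)$. Choosing $c$ for $\mu=\lambda_1$, the right inequality reads $k-\lambda_1\le\lambda_n(k-1)$, i.e.\ $\lambda_n-\lambda_1\le k(\lambda_n-1)$, hence $k\ge(\lambda_n-\lambda_1)/(\lambda_n-1)$. The larger of these two lower bounds is precisely $\chi_{\mathrm{v}}(\Gamma)\ge(\lambda_n-\lambda_1)/\min\{\lambda_n-1,\,1-\lambda_1\}$.

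The only step that needs a genuine idea is the choice $\mathbf{v_i}=c_i\mathbf{u_i}$: one must notice that scaling the vector colouring by an eigenfunction of $\mathcal L$ turns the bilinear form in Lemma~\ref{lemma:Chung} into an affine function of that eigenfunction's Rayleigh quotient, which is exactly where the support property of $\mathcal L$ is used. After that everything is elementary rearrangement, and I foresee no real obstacle beyond keeping the trivial case $\lambda_1=\lambda_n=1$ out of the way.
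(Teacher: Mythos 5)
Your proposal is correct and follows essentially the same route as the paper: apply Lemma~\ref{lemma:Chung} to the vectors $\mathbf{v_i}=c_i\mathbf{u_i}$ obtained by scaling an optimal vector colouring by a Laplacian eigenfunction, and read off the two bounds $\chi_{\mathrm{v}}\ge(\lambda_n-\lambda_1)/(\lambda_n-1)$ and $\chi_{\mathrm{v}}\ge(\lambda_n-\lambda_1)/(1-\lambda_1)$ from the two ends of the spectrum. Your treatment is in fact slightly tidier than the paper's, since you isolate the degenerate case $L=\id$ and take $c$ to be an eigenfunction of the symmetric Chung Laplacian $\mathcal{L}=D^{1/2}LD^{-1/2}$, which is exactly what makes the identity $\sum_{i,j}\sqrt{\deg(i)/\deg(j)}\,L_{ij}c_ic_j=\mu\|c\|^2$ legitimate.
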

	\begin{proof}
		Let $\mathbf{u_1},\ldots,\mathbf{u_n}$ be unit vectors on which the vector chromatic number is attained, i.e. $$\langle \mathbf{u_i},\mathbf{u_j}\rangle= \frac{-1}{\chi_{\textrm{v}}-1}$$ whenever $i\neq j\in h$ for some $h\in H$. Let $f:V\rightarrow \R$ be an eigenfunction corresponding to the smallest eigenvalue $\lambda_1$ and let $\mathbf{v_i}:=f(i)\mathbf{u_i}$, for $i=1,\ldots,n$. Then, by Lemma \ref{lemma:Chung},
		\begin{align*}
		\lambda_n&\ge \frac{\sum_{i,j=1}^n\sqrt{\frac{\deg(i)}{\deg(j)}}L_{i,j}\langle \mathbf{v_i},\mathbf{v_j}\rangle}{\sum_{i=1}^n \|\mathbf{v_i}\|^2}
		\\&= \frac{\sum_{i\ne j}\sqrt{\frac{\deg(i)}{\deg(j)}}L_{i,j}f(i)f(j)\langle \mathbf{u_i},\mathbf{u_j}\rangle}{\sum_i f(i)^2} +1
		\\&=\frac{-1}{\chi_{\textrm{v}}-1}\frac{\sum_{i\ne j}f(i)f(j)\sqrt{\frac{\deg(i)}{\deg(j)}}L_{i,j}}{\sum_{i\in V}f(i)^2}+1
		\\&= \frac{-1}{\chi_{\textrm{v}}-1}\frac{\sum_{i, j=1}^nf(i)f(j)\sqrt{\frac{\deg(i)}{\deg(j)}}L_{i,j}}{\sum_{i\in V}f(i)^2}+ \frac{ \chi_{\textrm{v}}}{\chi_{\textrm{v}}-1}
		\\&= \frac{-1}{\chi_{\textrm{v}}-1}\lambda_1+ \frac{ \chi_{\textrm{v}}}{\chi_{\textrm{v}}-1},
		\end{align*}
		which implies that $\chi_{\textrm{v}}\ge\frac{\lambda_n-\lambda_1}{\lambda_n-1}$. A similar argument gives $\chi_{\textrm{v}}\ge\frac{\lambda_n-\lambda_1}{1-\lambda_1}$.
	\end{proof}

	The following result is a direct consequence of Theorem \ref{thm:sandwich} and Theorem \ref{thm:vector-chromatic}.
	\begin{corollary}
		\label{coro:vector-chromatic}For any oriented hypergraph $\Gamma$,
		\begin{equation*}
		\chi(\Gamma)\geq \frac{\lambda_n-\lambda_1}{\min\{\lambda_n-1,1-\lambda_1\}}.
		\end{equation*}
	\end{corollary}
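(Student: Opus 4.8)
The plan is to derive the bound by concatenating the two previously established inequalities, with no additional spectral work required. First I would invoke the right-hand inequality of the Sandwich Theorem (Theorem \ref{thm:sandwich}), namely $\chi_{\mathrm{v}}(\Gamma) \le \chi(\Gamma)$, which holds for every oriented hypergraph. Next I would invoke Theorem \ref{thm:vector-chromatic}, which gives the lower bound $\chi_{\mathrm{v}}(\Gamma) \ge (\lambda_n - \lambda_1)/\min\{\lambda_n - 1, 1 - \lambda_1\}$ directly in terms of the extreme eigenvalues of the normalized Laplacian.

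Putting these together in the obvious order yields
\begin{equation*}
\chi(\Gamma) \;\ge\; \chi_{\mathrm{v}}(\Gamma) \;\ge\; \frac{\lambda_n - \lambda_1}{\min\{\lambda_n - 1,\, 1 - \lambda_1\}},
\end{equation*}
which is exactly the asserted inequality. The left-hand part of Theorem \ref{thm:sandwich} (the bound $\omega(\Gamma) \le \chi_{\mathrm{v}}(\Gamma)$) plays no role here.

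The only subtlety worth flagging is the behaviour of the denominator $\min\{\lambda_n - 1,\, 1 - \lambda_1\}$. Since $A_{ii} = 0$ for all $i$, one has $\operatorname{tr} L = n$, so the eigenvalues of $L$ average to $1$ and therefore $\lambda_1 \le 1 \le \lambda_n$; moreover $\lambda_1 = 1$ forces all $\lambda_i = 1$, and likewise for $\lambda_n = 1$. Hence the denominator is strictly positive unless every eigenvalue equals $1$, in which case the numerator $\lambda_n - \lambda_1$ vanishes as well and the inequality is read as the vacuous statement $\chi(\Gamma) \ge 0$. In the remaining (generic) case both sides are honest positive reals and the displayed chain of inequalities is literal. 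In short, there is no genuine obstacle: all the content lives in Theorems \ref{thm:sandwich} and \ref{thm:vector-chromatic}, and this corollary is a one-line consequence of them.
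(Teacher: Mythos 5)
Your proposal is correct and matches the paper exactly: the corollary is stated there as a direct consequence of Theorem \ref{thm:sandwich} (via $\chi_{\mathrm{v}}\le\chi$) and Theorem \ref{thm:vector-chromatic}, which is precisely your chain of inequalities. The extra remark on the positivity of the denominator is a harmless clarification and does not change the argument.
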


	We finish this section with some examples that discuss the tightness of the bound from Theorem \ref{thm:vector-chromatic}.
	\begin{ex}
		Let $\Gamma$ be an oriented hypergraph on $n$ nodes and one single hyperedge containing all vertices. Then,  $\chi=n$, $\lambda_1=0$ and, by \cite[Theorem 1]{Sharp}, $\lambda_n=n$. Therefore, by Theorem \ref{thm:sandwich} and Theorem \ref{thm:vector-chromatic}, it holds that
		\begin{equation*}
		n=\chi(\Gamma)\geq \chi_{\textrm{v}}(\Gamma)\geq \frac{\lambda_n-\lambda_1}{\min\{\lambda_n-1,1-\lambda_1\}}=n.
		\end{equation*}Observe that for this $\Gamma$ both inequalities become equalities, providing an example of when the bound in Theorem \ref{thm:vector-chromatic} is attained.
	\end{ex}
	\begin{ex}\label{ex:5}
		Let $\Gamma=(V,H,\psi_\Gamma)$ be such that (Figure \ref{fig:5}):
		\begin{itemize}
			\item $V=\{1,2,3\}$;
			\item $H=\{h_1,h_2,h_3\}$;
			\item $h_1$ has $1$ as input and $2$ as output;
			\item $h_2$ has $1$ as input and $3$ as output;
			\item $h_3$ has $1$, $2$ and $3$ as inputs.
		\end{itemize}Then, $\chi_{\textrm{v}}(\Gamma)=3$,  $\lambda_3=3/2$ and $\lambda_1=1/2$. Hence,
		\begin{equation*}
		\chi_{\textrm{v}}(\Gamma)=3>\frac{\lambda_3-\lambda_1}{\min\{\lambda_3-1,1-\lambda_1\}}=2.
		\end{equation*}
		Therefore, the inequality in Theorem \ref{thm:vector-chromatic} cannot be improved by using equality.
		
		\begin{figure}[h]
			\centering
			\includegraphics[width=6cm]{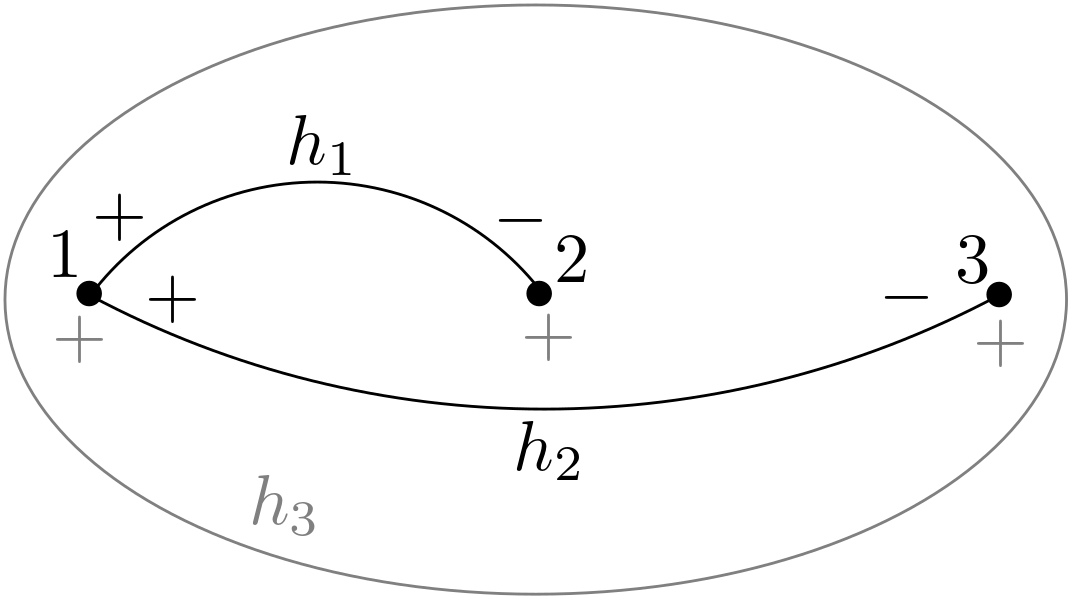}
			\caption{The oriented hypergraph from Example \ref{ex:5}.}
			\label{fig:5}
		\end{figure}
	\end{ex}

	\section{Spectral partition numbers}\label{section:spectralpart} 
	
	Finally, in this section we introduce the spectral partition numbers of an oriented hypergraph and show that they strictly relate to the coloring number discussed in Section \ref{section:vector}.
	
	Given $\lambda\geq 0$, we define the \emph{spectral partition number} of an oriented hypergraph $N_{\geq}(\lambda)$ (respectively $N_{\leq}(\lambda)$) as the smallest $k$ such that there exists a $k$--partition $V=V_1\sqcup\ldots\sqcup V_k$ of the vertex set with
	\begin{equation*}
	\lambda_1(\Gamma|_{V_l})\geq \lambda\text{ (respectively }\lambda_{\max}(\Gamma|_{V_l})\leq \lambda)\quad \text{for all }l=1,\ldots,k.
	\end{equation*}Note that $N_{\ge}(\lambda)$ is well defined for $\lambda\le1$ and it is nondecreasing in $\lambda$. Conversely, $N_{\le}(\lambda)$ is well defined for $\lambda\ge1$ and it is nonincreasing in $\lambda$. Thus, both $N_{\ge}(\lambda)$ and  $N_{\le}(\lambda)$ reach their maxima at $\lambda=1$.

	The following result shows that $N_{\geq}(1)=N_{\leq}(1)$ and, in addition, it relates the spectral partition numbers with the coloring number.
	\begin{proposition}\label{prop:chiN}
		For any oriented hypergraph, $N_{\geq}(1)=N_{\leq}(1)\leq \chi$. For graphs, $N_{\geq}(1)=N_{\leq}(1)= \chi$.
	\end{proposition}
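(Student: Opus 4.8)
The plan is to reduce the whole statement to a fact about the principal submatrices of the adjacency matrix $A=A(\Gamma)$. First I would verify that for every $S\subseteq V$ the sub-hypergraph $\Gamma|_S$ has adjacency matrix $A(\Gamma|_S)=A(\Gamma)[S]$, the principal submatrix indexed by $S$, and degree matrix $D(\Gamma|_S)=D(\Gamma)[S]$. For $A$ this holds because two vertices $i\ne j$ of $S$ are co-oriented (resp.\ anti-oriented) in $h\cap S$ exactly when they are co-oriented (resp.\ anti-oriented) in $h$, and because $H|_S$ is taken as a multiset so multiplicities are preserved; for $D$ it holds because every hyperedge through a vertex $i\in S$ survives in $H|_S$, possibly as the singleton $\{i\}$, which does not affect $A$ but still counts towards the degree. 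Hence $D(\Gamma)[S]$ is invertible (degrees are preserved and there are no degree-zero vertices), and $L(\Gamma|_S)=\id-D[S]^{-1}A[S]$ is similar to the symmetric matrix $\id-D[S]^{-1/2}A[S]D[S]^{-1/2}$, whose eigenvalues are $1-\mu$ with $\mu$ an eigenvalue of $D[S]^{-1/2}A[S]D[S]^{-1/2}$.

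From this, $\lambda_1(\Gamma|_S)\ge 1$ is equivalent to all such $\mu$ being $\le 0$, i.e.\ to $A[S]\preceq 0$ (congruence preserves semidefiniteness), and likewise $\lambda_{\max}(\Gamma|_S)\le 1$ is equivalent to $A[S]\succeq 0$. The key point I would then use is the elementary fact that a real symmetric matrix $M$ with zero diagonal that is positive or negative semidefinite must vanish: each $2\times 2$ principal submatrix has the form $\begin{pmatrix}0 & M_{ij}\\ M_{ij} & 0\end{pmatrix}$ with eigenvalues $\pm|M_{ij}|$, and since principal submatrices inherit semidefiniteness this forces $M_{ij}=0$. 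As $A$, hence $A[S]$, has zero diagonal, this yields the chain $\lambda_1(\Gamma|_S)\ge 1\iff A[S]=0\iff \lambda_{\max}(\Gamma|_S)\le 1\iff S\text{ is weakly independent}$.

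It follows that $N_{\ge}(1)$ and $N_{\le}(1)$ are both equal to the least $k$ for which $V$ can be partitioned into $k$ weakly independent parts; in particular $N_{\ge}(1)=N_{\le}(1)$. For the bound $\le\chi$, I would take a proper $\chi$-coloring: each color class is independent, hence weakly independent (if $i\ne j$ share no hyperedge then $A_{ij}=0$), so this partition certifies $N_{\ge}(1)=N_{\le}(1)\le\chi$.

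For the graph case — meaning, in the paper's convention, an oriented hypergraph whose hyperedge set is an ordinary set of size-$2$ hyperedges, possibly signed — any two vertices lie in at most one hyperedge, so $A_{ij}\in\{-1,0,1\}$ with $A_{ij}=0$ precisely when $i,j$ are non-adjacent; thus ``weakly independent'' coincides with ``independent'', an optimal partition into weakly independent sets is exactly an optimal proper coloring, and therefore $N_{\ge}(1)=N_{\le}(1)=\chi$. The only genuinely delicate step is the first one: checking that restriction to $S$ really produces the principal submatrices of $A$ and $D$, with the multiset convention for $H|_S$ and the singleton hyperedges handled correctly. Everything after that is routine linear algebra.
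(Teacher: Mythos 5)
Your proof is correct and follows essentially the same route as the paper: both reduce the two spectral conditions to $A(\Gamma|_{V_l})=\mathbf{0}$, i.e.\ to a partition into weakly independent sets, and then compare this with the partition into coloring classes (with weak independence coinciding with independence in the graph case). The only difference is how that equivalence is justified: the paper invokes the fact that the eigenvalues of the normalized Laplacian sum to the number of vertices, while you symmetrize $L(\Gamma|_{V_l})$ and use that a positive or negative semidefinite symmetric matrix with zero diagonal must vanish --- a self-contained variant of the same trace-based idea.
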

	\begin{proof}We first prove the inequality for the general case of oriented hypergraphs. As shown in \cite{MulasZhang}, the  sum of the eigenvalues of a hypergraph equals the number of vertices. Therefore, given a $k$--partition $V=\sqcup_l V_l$,
		\begin{align*}
		\lambda_1(\Gamma|_{V_l})\geq 1\quad \text{for all } l \,&\Longleftrightarrow \lambda_1(\Gamma|_{V_l})=\ldots=\lambda_{\max}(\Gamma|_{V_l})=1\quad \text{for all } l\\
		&\Longleftrightarrow\lambda_{\max}(\Gamma|_{V_l})\leq 1\quad \text{for all } l\\
		&\Longleftrightarrow L(\Gamma|_{V_l})=\id \quad \text{for all } l\\
		&\Longleftrightarrow A(\Gamma|_{V_l})=\mathbf{0},
		\end{align*}
		where $\mathbf{0}$ is the all--zero matrix of corresponding size. Since a vertex partition given by coloring classes always satisfies $A(\Gamma|_{V_l})=\mathbf{0}$, this shows that $N_{\geq}(1)=N_{\leq}(1)\leq \chi$.
		
		For the case of graphs, we known that $A(\Gamma|_{V_l})=\mathbf{0}$ if and only if $\Gamma|_{V_l}$ has no edges, therefore in this case the partition $V=\sqcup_l V_l$ gives a partition into coloring classes, implying that $N_{\geq}(1)=N_{\leq}(1)= \chi$.
	\end{proof}
	
	The following example shows that the equality $N_{\geq}(1)=N_{\leq}(1)= \chi$ does not hold for general oriented hypergraphs.

	\begin{ex}\label{ex:3}
		Let $\Gamma$ be an oriented hypergraph with vertices $\{1, 2, 3\}$ and  hyperedges $\{h_1, h_2\}$ such that $h_1$ has $1$ as input and $2$ as output, while $h_2$ has $1$ and $2$ as inputs and $3$ as output (Figure \ref{fig:3}). Then,  
		\begin{itemize}
			\item $\lambda_1(\Gamma|_{\{1,2\}})=\lambda_2(\Gamma|_{\{1,2\}})=\lambda_1(\Gamma|_{\{1\}})=\lambda_1(\Gamma|_{\{2\}})=\lambda_1(\Gamma|_{\{3\}})=1$;
			\item $\lambda_1(\Gamma|_{\{1,3\}})=\lambda_1(\Gamma|_{\{2,3\}})=1-\frac{1}{\sqrt{2}}$;
			\item $\lambda_2(\Gamma|_{\{1,3\}})=\lambda_2(\Gamma|_{\{2,3\}})=1+\frac{1}{\sqrt{2}}$;
			\item  $\lambda_1(\Gamma)=0$, $\lambda_2(\Gamma)=1$; $\lambda_3(\Gamma)=2$
		\end{itemize}
		
		and it holds that $\chi(\Gamma)=3>2=N_{\le }(1)=N_{\ge}(1)$. 
		
		\begin{figure}[h]
			\centering
			\includegraphics[width=7cm]{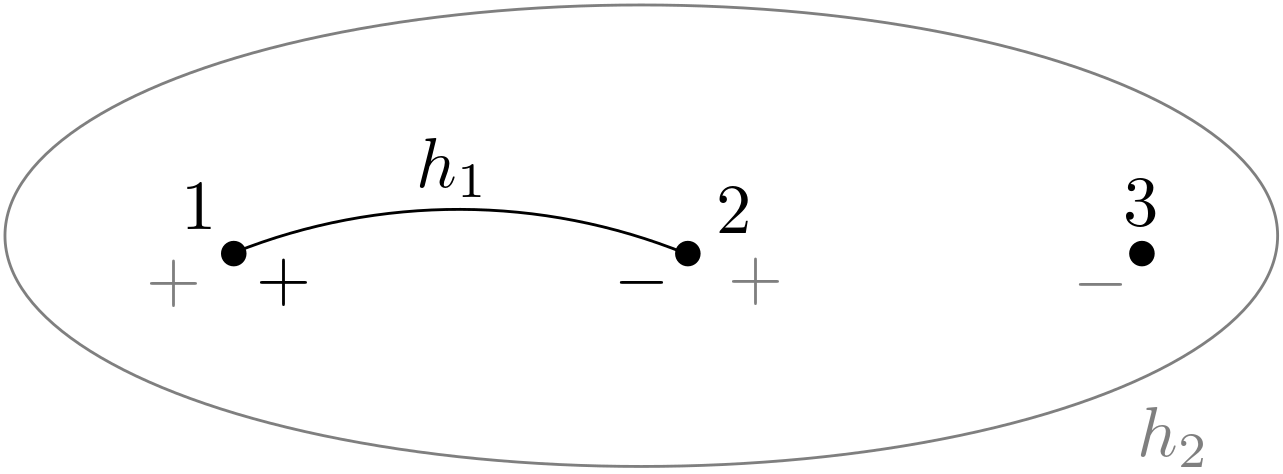}
			\caption{The oriented hypergraph from Example \ref{ex:3}.}
			\label{fig:3}
	\end{figure}\end{ex}

	Our next result lower bounds  both spectral partition numbers in terms of the smallest and largest eigenvalues of the normalized Laplacian matrix of an oriented hypergraph $\Gamma$.
	
	\begin{theorem}\label{thm:last} Let $\Gamma$ be an oriented hypergraph with eigenvalues of the normalized Laplacian matrix $\lambda_1\leq\ldots\leq \lambda_n$. Then, for any $\lambda\geq 0$,
		\begin{equation*}
		N_{\leq}(\lambda)\ge\frac{\lambda_n-\lambda_1}{ \lambda-\lambda_1}\quad\text{and}\quad N_{\geq}(\lambda)\ge\frac{\lambda_n-\lambda_1}{ \lambda_n-\lambda}.
		\end{equation*}
	\end{theorem}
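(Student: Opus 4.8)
The plan is to obtain both estimates from a single spectral principle: if a symmetric positive semi-definite matrix is split into $k$ diagonal blocks according to a vertex partition and each diagonal block has largest eigenvalue at most $\mu$, then the whole matrix has largest eigenvalue at most $k\mu$. I will apply this to the shifted Chung Laplacians $\mathcal{L}-\lambda_1\id$ and $\lambda_n\id-\mathcal{L}$, where $\mathcal{L}=D^{1/2}LD^{-1/2}=\id-D^{-1/2}AD^{-1/2}$ is the symmetric matrix with the same spectrum $\lambda_1\le\cdots\le\lambda_n$ as $L$.

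First I would record the structural observation that $\mathcal{L}$ restricts well: for every $S\subseteq V$, the Chung Laplacian $\mathcal{L}(\Gamma|_S)$ is precisely the principal submatrix of $\mathcal{L}(\Gamma)$ indexed by $S$. Indeed, restriction to $S$ does not change the degree of any vertex of $S$, and for $i,j\in S$ the co-/anti-orientation counts of $i,j$ in a hyperedge $h$ agree with those in $h\cap S$; hence $A(\Gamma|_S)$ and $D(\Gamma|_S)$ are the $S$-principal submatrices of $A(\Gamma)$ and $D(\Gamma)$, and since $D$ is diagonal the same holds for $\mathcal{L}$. Consequently, given a partition $V=V_1\sqcup\cdots\sqcup V_k$, the diagonal blocks of $\mathcal{L}(\Gamma)$ along this partition are exactly $\mathcal{L}(\Gamma|_{V_1}),\dots,\mathcal{L}(\Gamma|_{V_k})$, so any $f\colon V\to\R$ supported inside a single part $V_l$ satisfies $\lambda_1(\Gamma|_{V_l})\,\|f\|^2\le f^\top\mathcal{L}f=f^\top\mathcal{L}(\Gamma|_{V_l})f\le\lambda_{\max}(\Gamma|_{V_l})\,\|f\|^2$.

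Next I would prove the spectral principle itself. Let $\mathcal{M}$ be symmetric positive semi-definite, let $V=V_1\sqcup\cdots\sqcup V_k$, and suppose $f^\top\mathcal{M}f\le\mu\,\|f\|^2$ for every $f$ supported in one part. Writing an arbitrary $f$ as $f=f_1+\cdots+f_k$ with $f_l$ supported in $V_l$, and applying the Cauchy--Schwarz inequality for the positive semi-definite bilinear form $(u,v)\mapsto u^\top\mathcal{M}v$,
\begin{align*}
f^\top\mathcal{M}f=\sum_{l,l'=1}^{k}f_l^\top\mathcal{M}f_{l'}
&\le\sum_{l,l'=1}^{k}\sqrt{f_l^\top\mathcal{M}f_l}\;\sqrt{f_{l'}^\top\mathcal{M}f_{l'}}\\
&=\Bigl(\sum_{l=1}^{k}\sqrt{f_l^\top\mathcal{M}f_l}\Bigr)^{2}\le\mu\Bigl(\sum_{l=1}^{k}\|f_l\|\Bigr)^{2}\\
&\le k\mu\sum_{l=1}^{k}\|f_l\|^{2}=k\mu\,\|f\|^{2},
\end{align*}
where the penultimate step is Cauchy--Schwarz in $\R^k$ and the last equality uses that the parts partition the coordinates; maximizing over $f$ gives $\lambda_{\max}(\mathcal{M})\le k\mu$.

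Finally I would assemble the two bounds (assuming $\lambda_1<\lambda$ in the first case and $\lambda<\lambda_n$ in the second, the remaining cases being elementary). For $N_{\le}(\lambda)$ choose an optimal partition with $k=N_{\le}(\lambda)$ parts and $\lambda_{\max}(\Gamma|_{V_l})\le\lambda$ for all $l$, and apply the principle to $\mathcal{M}:=\mathcal{L}-\lambda_1\id\succeq0$: for $f$ supported in $V_l$, $f^\top\mathcal{M}f=f^\top\mathcal{L}(\Gamma|_{V_l})f-\lambda_1\|f\|^2\le(\lambda-\lambda_1)\|f\|^2$, hence $\lambda_n-\lambda_1=\lambda_{\max}(\mathcal{M})\le k(\lambda-\lambda_1)$, i.e.\ $N_{\le}(\lambda)\ge(\lambda_n-\lambda_1)/(\lambda-\lambda_1)$. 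Symmetrically, for $N_{\ge}(\lambda)$ choose an optimal partition with $\lambda_1(\Gamma|_{V_l})\ge\lambda$ for all $l$ and apply the principle to $\mathcal{M}:=\lambda_n\id-\mathcal{L}\succeq0$: for $f$ supported in $V_l$, $f^\top\mathcal{M}f=\lambda_n\|f\|^2-f^\top\mathcal{L}(\Gamma|_{V_l})f\le(\lambda_n-\lambda_1(\Gamma|_{V_l}))\|f\|^2\le(\lambda_n-\lambda)\|f\|^2$, whence $\lambda_n-\lambda_1\le k(\lambda_n-\lambda)$ and $N_{\ge}(\lambda)\ge(\lambda_n-\lambda_1)/(\lambda_n-\lambda)$. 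The main point — and the reason a direct Rayleigh-quotient estimate only yields the weaker $N_{\le}(\lambda)\ge\lambda_n/\lambda$ — is exactly this shift: it is positive semi-definiteness of the shifted matrix that makes the Cauchy--Schwarz step usable to control the off-diagonal blocks of $\mathcal{L}$.
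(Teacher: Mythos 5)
Your proof is correct, and it takes a genuinely different route from the paper. The paper proves the bound by mirroring its vector--chromatic argument (Theorem \ref{thm:vector-chromatic}): it assigns the vertices of a regular $(k-1)$--simplex to the parts, scales them by an eigenfunction of $\lambda_n$, and feeds the resulting vectors into the quadratic--form bound of Lemma \ref{lemma:Chung}, splitting the form into same--part and different--part contributions; rearranging yields exactly your inequality $\lambda_n-\lambda_1\le k(\lambda-\lambda_1)$, and the second bound is left as an ``analogous argument''. You instead isolate a clean block--decomposition lemma for positive semi-definite matrices (Cauchy--Schwarz for the form $u^\top\mathcal{M}v$ plus Cauchy--Schwarz in $\R^k$) and apply it to the shifted Chung Laplacians $\mathcal{L}-\lambda_1\id$ and $\lambda_n\id-\mathcal{L}$. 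What your approach buys: it is self-contained (no simplex vectors, no Lemma \ref{lemma:Chung}), it treats the two inequalities in a visibly symmetric way, it makes explicit a fact the paper only uses implicitly\,---\,that the principal submatrix of $\mathcal{L}(\Gamma)$ indexed by $S$ is $\mathcal{L}(\Gamma|_S)$, because degrees and the co-/anti-orientation counts are preserved under restriction\,---\,and your closing remark correctly identifies why the shift (i.e.\ positive semi-definiteness) is what allows the off-diagonal blocks to be controlled. What the paper's approach buys is conceptual economy within the article: it reuses the already established machinery of Lemma \ref{lemma:Chung} and parallels the proof of Theorem \ref{thm:vector-chromatic}, making the kinship between the spectral partition numbers and the vector chromatic number transparent. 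Your explicit caveat about the degenerate cases $\lambda\le\lambda_1$ and $\lambda\ge\lambda_n$ is reasonable (and the paper glosses over them as well); note also that your statement of the optimal partition for $N_{\le}(\lambda)$ corrects a slip in the paper's proof, which writes the defining condition of $N_{\ge}$ there while actually using $\lambda_{\max}(\Gamma|_{V_l})\le\lambda$ in the estimate.
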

	
	\begin{proof} We focus on showing the first inequality, since the second one follows by an analogous argument. Let $k:=N_{\leq}(\lambda)$ and let $V=V_1\sqcup\ldots\sqcup V_k$ a $k$--partition such that $\lambda_1(V_l)\geq\lambda$ for all $l=1,\ldots,k$. We use the notation $i\sim j$ (respectively $i\nsim j$) provided $i,j\in V$ belong to the same $V_m$ (respectively $i,j\in V$ belong to different sets of the $k$--partition).
		
		Let $\mathbf{u_1},\ldots, \mathbf{u_k}$ be the vertices of a $(k-1)$--dimensional regular simplex centered at $\mathbf{0}$ in $\mathbb{R}^n$. Then, $\langle \mathbf{u_l}, \mathbf{u_p}\rangle=-1/(k-1)$ whenever $p\ne m$. Let $f$ be an eigenfunction for $\lambda_n$ and, given $i\in V$, let $\mathbf{v_i}:=f(i)\cdot \mathbf{u_m}$ provided $i\in V_m$. Then,
		
		\begin{align*}
		\lambda_1&\le \frac{\sum_{i,j\in V}f(i)f(j)\sqrt{\frac{\deg(i)}{\deg(j)}}L_{ij}\langle \mathbf{u_i},\mathbf{u_j}\rangle}{\sum_{i\in V}f(i)^2\|\mathbf{u_i}\|^2}
		\\&=-\frac{1}{k-1} \frac{\sum_{i\nsim j}f_if_j\sqrt{\frac{\deg(i)}{\deg(j)}}L_{ij}}{\sum_{i\in V}f_i^2}+\frac{\sum_{i\sim j}f_if_j\sqrt{\frac{\deg(i)}{\deg(j)}}L_{ij}}{\sum_{i\in V}f_i^2}
		\\&=-\frac{1}{k-1}\frac{\sum_{i,j}f_if_j\sqrt{\frac{\deg(i)}{\deg(j)}}L_{ij}}{\sum_{i\in V}f_i^2}+\frac{k}{k-1}\frac{\sum_{i\sim j}f_if_j\sqrt{\frac{\deg(i)}{\deg(j)}}L_{ij}}{\sum_{i\in V}f_i^2}
		\\&\le -\frac{1}{k-1}\lambda_n + \frac{k}{k-1}\lambda
		\end{align*}which implies that $k\geq (\lambda_n-\lambda_1)/(\lambda-\lambda_1).$
		
	\end{proof}
	
	\begin{ex}
		Consider the same oriented hypergraph $\Gamma$ as in Example \ref{ex:3}. Then, 
		\begin{equation*}
		\frac{\lambda_n-\lambda_1}{\min\{\lambda_n-1,1-\lambda_1\}}=2=N_{\le }(1)=N_{\ge}(1).
		\end{equation*}
		Therefore, $\Gamma$ provides an example for which the bounds from Theorem \ref{thm:last} are sharp.
	\end{ex}

	\section*{Acknowledgments}
	The authors are grateful to the anonymous referee for the comments and suggestions that have greatly improved the first version of this paper. The research of A. Abiad is partially supported by the FWO grant 1285921N.

\end{document}